\g@addto@macro{\endabstract}{\@setabstract}
\DeclareMathOperator{\osc}{osc}
\newcommand{\setntn}[2]{ \{ #1 : #2 \} }
\newcommand{\preqsd}{\preceq_{sd} }
\newcommand{\1}{\mathbbm 1}
\newcommand{\dD}{\mathcal D}
\newcommand{\bB}{\mathcal B}
\newcommand{\hH}{\mathcal H}
\newcommand{\mM}{\mathcal M}
\newcommand{\fF}{\mathcal F}
\newcommand{\pP}{\mathcal P}
\newcommand{\RR}{\mathbbm R}
\newcommand{\PP}{\mathbbm P}
\newcommand{\EE}{\mathbbm E \,}
\theoremstyle{plain}
\newtheorem{theorem}{Theorem}[section]
\newtheorem{proposition}{Proposition}[section]
\theoremstyle{definition}
\newtheorem{axiom}{Axiom}[section]
\begin{document}

\title{}

\date{\today}

\begin{center}
  \LARGE 
    Partial Stochastic Dominance via \\ Optimal
    Transport\footnote{Corresponding author:
        \texttt{john.stachurski@anu.edu.au},
    RSE, College of Business and Economics, Australian National University,
    ACT, Australia.} 
    \vspace{1em}

  \large
  Takashi Kamihigashi\textsuperscript{a} and John Stachurski\textsuperscript{b} \par \bigskip

  \small
  \textsuperscript{a}Center for Computational Social Science, Kobe University \par
  \textsuperscript{b}Research School of Economics, Australian National University \bigskip

  \normalsize
  \today
\end{center}

\begin{abstract}
    In recent years, a range of measures of ``partial'' stochastic dominance have been introduced.  These measures attempt to determine the extent to which one distribution is dominated by another.  We assess these measures from intuitive, axiomatic, computational and statistical perspectives.  Our investigation leads us to recommend a measure related to optimal transport as a natural default.
    \vspace{1em}

    \noindent
    \textit{Keywords:} Stochastic dominance, optimal transport
\end{abstract}


\section{Introduction}

(First order) stochastic dominance is one of the most fundamental concepts in
social welfare and decision making under uncertainty (see, e.g.,
\cite{follmer2011stochastic, stoyanov2012metrization}).  At the same time, stochastic dominance is
fragile.  For example, two normal distributions can only be ordered by
stochastic dominance if their
variances are exactly identical, even if one mean is orders of magnitude
larger than the other.  Such fragility is problematic for quantitative
work.



In response, researchers have introduced many different notions of ``partial''
stochastic dominance.  One is ``restricted stochastic dominance,'' which
compares the order of cumulative distribution functions ({\sc cdf}s) only up to some
specified point in the domain \cite{atkinson1987measurement,
davidson2000statistical, davidson2013testing}.  Another is ``almost
stochastic dominance,'' which was developed in the finance literature
\cite{leshno2002preferred, levy2009almost}.    A third kind of
measure was proposed in \cite{fields2002stochastic}, analyzing degree of
stochastic dominance in the context of income mobility analysis.  Still more
measures are considered in \cite{stoyanov2012metrization}.

Despite the obvious practical relevance of a measure of partial stochastic
dominance, none of the above have a clear axiomatic foundation.  At the same
time, some measures become complex outside of the one-dimensional case.  These
observations suggest that now is a good time to consider measures of partial
stochastic dominance collectively.  While doing so, we make the case for what
we believe is the most natural measure of partial stochastic dominance.  The
measure can be understood as the solution to an optimal transport problem.

\section{A Measure of Dominance}\label{s:am}

Let $S$ be Polish with Borel sets $\bB$ and closed partial order $\preceq$
(i.e., a partial order on $S$ such that $\setntn{(x,y) \in S \times S}{x
\preceq y}$ is closed in the product topology).  A function $h \colon S \to
\RR$ is called \emph{increasing} if $x \preceq y$ implies $h(x) \leq h(y)$,
and \emph{decreasing} if $-h$ is increasing.  Let $\hH$ be the set of increasing
Borel measurable $h$ on $S$ with $\osc(h) \leq 1$ (that is, $\sup h - \inf h
\leq 1$).  Let $\mM$ be all finite measures on $(S, \bB)$ and $\pP$ be all
$\mu \in \mM$ with $\mu(S)=1$.
A measure $\mu \in \mM$ is said to be \emph{stochastically dominated} by $\nu
\in \mM$ and we write $\mu \preqsd \nu$ if $\mu(S) = \nu(S)$ and $\int h \, d
\mu \leq \int h \, d\nu$ for all $h \in \hH$.
For probability measures $\mu, \nu \in \pP$, 
an alternative characterization is
\begin{equation}\label{eq:esd}
    \mu \preqsd \nu \;\; \;
    \text{ if and only if }
    \; \;
    \max_{(X, Y) \in \Pi(\mu, \nu)} \PP\{X \preceq Y \} = 1,
\end{equation}
where $\Pi(\mu, \nu)$ is the set of all couplings of $\mu$ and
$\nu$ \cite{strassen1965existence}.  (Given $(\mu, \nu) \in \pP \times \pP$, a pair of $S$-valued random
variables $(X,Y)$ defined on some probability space $(\Omega, \fF, \PP)$ is
called a \emph{coupling} of $(\mu, \nu)$ if $\mu(B) = \PP\{X \in B\}$ and
$\nu(B) = \PP\{Y \in B\}$ for all $B \in \bB$.) The short summary of our paper
is: in light of~\eqref{eq:esd}, why not use 
\begin{equation}\label{eq:deftau}
    \tau(\mu, \nu) := \max_{(X, Y) \in \Pi(\mu, \nu)} \PP\{X \preceq Y \} 
\end{equation}
as the default measure of partial stochastic dominance?
We answer this question in stages.  Existence of the maximum for each $(\mu,
\nu)$ pair is verified below.

Note that we can rewrite $\tau$ as
$1 - \min_{(X, Y) \in \Pi(\mu, \nu)} \EE \, c(X, Y)$,
where $c(x, y) = \1\{x \npreceq y\}$.  The infimum is a standard 
optimal transport problem.  In particular, $c$ is nonnegative, bounded and
lower-semicontinuous (since $\preceq$ is a closed partial order), so, by 
Theorem~4.1 of \cite{villani2008optimal}, a solution exists.
In particular, the ``max'' in~\eqref{eq:deftau} is justified.
From now on, we call a pair $(X, Y)$ that attains the maximum
in~\eqref{eq:deftau} an \emph{optimal coupling}.

It now follows from Kantorovich duality that 
$\min_{(X, Y) \in \Pi(\mu, \nu)} \EE \, c(X, Y)$
can be replaced by a supremum over supporting ``prices.'' 
In particular,
\begin{equation}\label{eq:deftau2}
    \tau(\mu, \nu) 
    = 1 - \max_{g \in \dD} \left\{ \int g d\nu - \int g d\mu \right\}.
\end{equation}
where $\dD$ is all $g \colon S \to \RR$ with $g(y) - g(x) \leq \1\{x \npreceq
y\}$.  This is (iii) of Theorem~5.10 of~\cite{villani2008optimal}.  (See
``Particular case 5.16'' on p.~60.)  The
existence of a maximizer is guaranteed by the same theorem.  Clearly
$\dD$ is just the set of decreasing functions on $S$ with $\osc(h) \leq 1$.  
Hence we can rewrite \eqref{eq:deftau2} as
\begin{equation}\label{eq:deftau3}
    \tau(\mu, \nu) 
    = 1 - \max_{h \in \hH} \left\{ \int h d\mu - \int h d\nu \right\}.
\end{equation}
These expressions have additional representations that can be useful in some
settings.  For example, if $\mu = F$ and $\nu = G$ are {\sc cdf}s, then one can
show
$\tau$ has the simple representation
\begin{equation}
    \label{eq:im}
    \tau(F, G) = 1 - \sup_x \{ G(x) - F(x) \}.
\end{equation}

The representation in \eqref{eq:deftau3} is very similar to measures of
partial stochastic dominance studied in \cite{stoyanov2012metrization}.  They
use essentially the same measure but replace $\hH$ with the set of
all increasing functions satisfying a Lipschitz bound with respect to the
underlying metric.  Such a measure is more
attuned to topology but lacks the some of the advantages of~\eqref{eq:deftau3}
described below.

\section{Alternative Measures}

\label{ss:em}

Here and below, a \emph{measure of degree of stochastic dominance} is any
function 
\begin{equation}
    \label{eq:dfdm}
    \delta \colon \pP \times \pP \to [0, 1] \;
    \text{ such that }
    \mu \preqsd \nu   
    \; \implies \;
    \delta(\mu, \nu) = 1.
\end{equation}
Clearly $\tau$ is a measure of degree of stochastic dominance
in the sense of \eqref{eq:dfdm}.
Another is the quantile ratio
measure proposed in \cite{fields2002stochastic}.  Let $S = [a, b]$ and let
$\preceq$ be the usual order $\leq$ on $\RR$.  Letting $F$ and $G$ be {\sc
cdf}s on
$[a, b]$, define
\begin{equation}
    \label{eq:mq}
    q(F, G) := \frac{m}{n}
    \quad \text{where} \quad
   m := \sum_{i=1}^n \1\{G(x_i) \leq F(x_i)\}.
\end{equation}
Here $\{x_i\}$ is a grid of $n$ specified values in $[a, b]$, typically
corresponding to some quantile points.  Thus $q(F, G)$ measures the fraction
of times that $G(x_i) \leq F(x_i)$ is observed over specified test points.
Clearly $q$ satisfies $q(F, G) = 1$ when $G \leq F$ pointwise (i.e., $F
\preqsd G$), which corresponds to \eqref{eq:dfdm}.

Another version of partial stochastic dominance is
\emph{restricted} stochastic dominance (see, e.g.
\cite{atkinson1987measurement, davidson2013testing}).  Let $F$ and
$G$ be one dimensional {\sc cdf}s on an interval $[a, b]$.  Given $c \in [a, b]$,
distribution $F$ is said to be dominated by $G$ in the restricted sense if
$G(x) \leq F(x)$ for all $x \leq c$.  We can turn this into a measure by
considering the largest such $c$, defined by
\begin{equation}
    \label{eq:cstar}
    c^*(F, G) := \sup \{c \in [a, b] \,:\, G(x) \leq F(x), \; \forall \, x \leq c\}.
\end{equation}
After normalizing we get 
\begin{equation}
    \label{eq:mr}
    r(F, G) := \frac{c^*(F, G) - a}{b - a} .
\end{equation}
Evidently $r$ is a measure of degree of stochastic dominance in the sense
of \eqref{eq:dfdm}.


Another measurement for partial stochastic dominance is 
\emph{almost} stochastic dominance \cite{levy1992stochastic,
    leshno2002preferred}.  Once again the context is $S = [a,b]$ with the
    usual order $\leq$.  For {\sc cdf}s $F$ and $G$ on $[a, b]$, the measure can be
expressed as
\begin{equation}
    \label{eq:ma}
    \alpha(F, G)
        := \frac{\int (F(x) - G(x))_+ \; dx}{\int |F(x) - G(x)| \; dx}
\end{equation}
where $v_+ := \max\{v, 0\}$ for any $v \in \RR$. 
Intuitively, if $F$ is almost dominated by $G$, then $G \leq F$ on most of its
domain, and $(F(x) - G(x))_+ =
|F(x) - G(x)|$ for most $x$. Hence $\alpha(F, G)$ is close to 1.  In order to
ensure that the measure is defined for all pairs $F, G$, 
we adopt the convention that $\alpha(F, G) = 1$ when $F=G$.


\section{Axioms}

\label{s:a}

Next we propose two axioms for degree of
stochastic dominance $\delta(\mu, \nu)$.  
The first says that $\delta(\mu, \nu)$ should not be large
unless the distributions are nearly ordered.  To state it, we 
write $\mu \leq \nu$ to indicate pointwise ordering on $\bB$
and define, for each $\mu, \nu \in \pP$, 
\begin{equation*}
    \Phi(\mu, \nu) 
    := \setntn{ (\mu', \nu') \in \mM \times \mM }
        {\mu' \leq \mu,\; \nu' \leq \nu,\; \mu' \preqsd \nu'}.
\end{equation*}
Think of $\Phi(\mu, \nu)$ as the set of ``ordered component pairs''
corresponding to $(\mu, \nu)$.  If $\mu$ is ``almost''
dominated by $\nu$, then we can choose relatively large components.
(Recall that, to admit the ordering
$\mu' \preqsd \nu'$, we insist that total
mass is equal, so $\mu'(S) = \nu'(S)$ must hold.)

\begin{axiom}\label{a:c}
    For each $(\mu, \nu) \in \pP \times \pP$ and $\epsilon > 0$,
    there exists a $(\mu', \nu')$ in $\Phi(\mu, \nu)$ such that
    $ \delta(\mu, \nu) \leq \mu'(S) + \epsilon$.
\end{axiom}

Figure~\ref{f:dms} helps to illustrate the axiom, with $\mu$ and $\nu$
represented by densities.  In the top subfigure, $\mu$ is in no sense
dominated by $\nu$, so we wish to enforce $\delta(\mu, \nu) = 0$.
Axiom~\ref{a:c} does enforce this, since, for this pair $(\mu, \nu)$, we
cannot extract an ordered component pair $(\mu', \nu')$ with positive mass.
Hence $\mu'(S) = \nu'(S)$ is always zero. 

In the lower subfigure, there is some overlap in probability mass, so we
should permit $\delta(\mu, \nu) > 0$.  Inspection shows this to be true.
For example, we could take both $\mu'$ and $\nu'$ to be the function enclosing the
shaded region (the pointwise infimum $\mu \wedge \nu$ of the density representations of $\mu$ and $\nu$), so that, as measures again, $\mu'(S) = \nu'(S) =$ the area of the shaded region.
Since this is positive, $\delta(\mu, \nu)$ can be positive.

\begin{figure}
    \centering
    \scalebox{0.65}{\includegraphics[clip=true, trim=0mm 5mm 0mm 5mm]{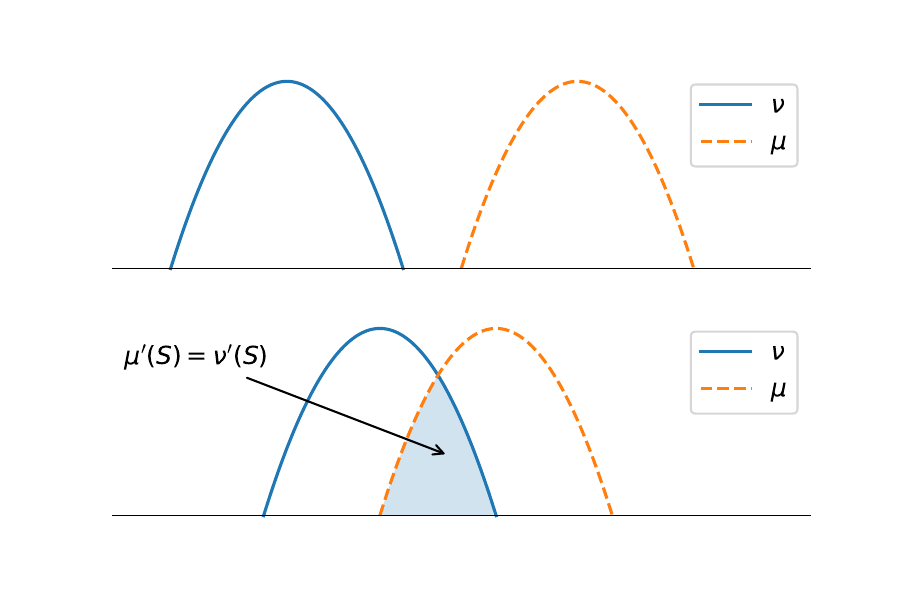}}
    \caption{\label{f:dms} Densities and ordered component pairs}
\end{figure}

Axiom~\ref{a:c} adds the converse implication to \eqref{eq:dfdm}, so
$\delta$ identifies ordered pairs: 

\begin{proposition}\label{p:iff}
    If $\delta$ is a measure of degree of stochastic dominance that satisfies
    Axiom~\ref{a:c}, then $\delta(\mu, \nu) = 1$ if and only if $\mu \preqsd \nu$.
\end{proposition}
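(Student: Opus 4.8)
The plan is to establish the two implications separately. The direction $\mu \preqsd \nu \implies \delta(\mu,\nu)=1$ requires nothing: it is exactly the defining property~\eqref{eq:dfdm} of a measure of degree of stochastic dominance. So the content is in the converse, and the strategy is to turn the ``large ordered component'' supplied by Axiom~\ref{a:c} into the inequality $\int h\,d\mu \le \int h\,d\nu$ for every $h\in\hH$ via a short telescoping estimate.

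First I would fix $h\in\hH$ and reduce to the case $0\le h\le 1$: since $\osc(h)\le 1$ the function $h$ is bounded, and replacing $h$ by $h-\inf h$ leaves it in $\hH$ and changes neither $\int h\,d\mu-\int h\,d\nu$ nor the hypotheses (the integral of a constant against $\mu$ or $\nu$ is that constant, as $\mu,\nu\in\pP$). Then, given $\epsilon>0$, Axiom~\ref{a:c} yields $(\mu',\nu')\in\Phi(\mu,\nu)$ with $1=\delta(\mu,\nu)\le\mu'(S)+\epsilon$, hence $\mu'(S)\ge 1-\epsilon$; since $\mu'\preqsd\nu'$ forces $\nu'(S)=\mu'(S)$, and since $\mu'\le\mu$ and $\nu'\le\nu$, the differences $\mu-\mu'$ and $\nu-\nu'$ are nonnegative finite measures, each of total mass at most $\epsilon$. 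Writing
\[
    \int h\,d\mu-\int h\,d\nu
    = \left( \int h\,d\mu-\int h\,d\mu' \right)
    + \left( \int h\,d\mu'-\int h\,d\nu' \right)
    + \left( \int h\,d\nu'-\int h\,d\nu \right),
\]
the middle term is $\le 0$ because $\mu'\preqsd\nu'$ and $h\in\hH$; the third term equals $-\int h\,d(\nu-\nu')\le 0$ because $h\ge 0$; and the first term equals $\int h\,d(\mu-\mu')\le(\mu-\mu')(S)\le\epsilon$ because $h\le 1$. Thus $\int h\,d\mu-\int h\,d\nu\le\epsilon$ for every $\epsilon>0$, so $\int h\,d\mu\le\int h\,d\nu$; as $h\in\hH$ was arbitrary and $\mu(S)=\nu(S)=1$, this is precisely $\mu\preqsd\nu$.

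I do not expect a genuine obstacle here. The only steps needing a moment's care are the normalization of $h$ into $[0,1]$ — which relies on the oscillation bound defining $\hH$ and on equality of total masses, so that additive constants are harmless — and the verification that $\mu-\mu'$ and $\nu-\nu'$ are legitimate nonnegative measures, which is what makes the two one-sided estimates in the displayed decomposition valid. Everything else is the telescoping bound above.
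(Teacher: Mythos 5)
Your proof is correct, but it follows a genuinely different route from the paper's. The paper's own argument is a two-line exact argument: from $\delta(\mu,\nu)=1$ and Axiom~\ref{a:c} it asserts the existence of a \emph{single} pair $(\mu',\nu')\in\Phi(\mu,\nu)$ with $\mu'(S)=\nu'(S)=1$, then notes that a sub-measure of a probability measure with total mass $1$ must equal that measure, so $\mu'=\mu$, $\nu'=\nu$, and $\mu\preqsd\nu$ follows immediately. Strictly speaking, Axiom~\ref{a:c} only delivers, for each $\epsilon>0$, a pair with $\mu'(S)\geq 1-\epsilon$; extracting a single full-mass pair from this family requires a limiting argument (tightness of $\{\mu'_\epsilon\}$ via $\mu'_\epsilon\leq\mu$, plus closedness of $\Phi(\mu,\nu)$ under the relevant convergence) that the paper does not spell out. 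Your approach sidesteps this entirely: you work directly with the $\epsilon$-approximate pairs and close the gap with the telescoping estimate $\int h\,d\mu-\int h\,d\nu\leq\epsilon$, using the normalization $0\leq h\leq 1$ to control the two error terms and $\mu'\preqsd\nu'$ to kill the middle one. What your version buys is rigor at the level of the axiom as literally stated, at the cost of a slightly longer argument; what the paper's version buys is brevity, at the cost of an implicit compactness step. Both are sound in substance, and your handling of the normalization (checking that subtracting $\inf h$ is harmless because $\mu(S)=\nu(S)$) and of the nonnegativity of $\mu-\mu'$ and $\nu-\nu'$ is exactly the care the estimate needs.
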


\begin{proof}
    Suppose that $\delta$ satisfies Axiom~\ref{a:c} and that $\delta(\mu,
    \nu) = 1$.  Then $\mu'(S) = \nu'(S) = 1$ for some $(\mu', \nu')$ in
    $\Phi(\mu, \nu)$. From this we immediately have $\mu' = \mu$ and $\nu' =
    \nu$, and hence $\mu \preqsd \nu$, which is all we need to show.
\end{proof}


The second axiom plays the opposite role.  It implies that $\delta(\mu, \nu)$
is close to $1$ when $\mu$ is ``nearly'' dominated by $\nu$.  

\begin{axiom}\label{a:mix}
    Given $\mu_a, \mu_b, \nu_a, \nu_b \in \pP$ and $\lambda \in [0, 1]$, 
    the ordering $\mu_a \preqsd \nu_a$ implies $\delta(\mu, \nu) \geq \lambda$
    for $\mu := \lambda \mu_a + (1 - \lambda) \mu_b$
        and $\nu := \lambda \nu_a + (1 - \lambda) \nu_b$.
\end{axiom}

Axiom~\ref{a:mix} uses the natural convexity of $\pP$ to implement
``continuity near 1'' while avoiding being tied to a particular topology.

The axioms we have listed are strong enough to give uniqueness:  

\begin{theorem}\label{t:uniqueness}
    The only measure of degree of stochastic dominance
    satisfying Axioms~\ref{a:c}--\ref{a:mix} is $\tau$.
\end{theorem}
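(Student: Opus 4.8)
The plan is to reduce the whole statement to a single auxiliary identity,
\begin{equation*}
    \tau(\mu, \nu) = \max \{\, \mu'(S) \,:\, (\mu', \nu') \in \Phi(\mu, \nu) \,\},
\end{equation*}
with the maximum attained for every $(\mu, \nu) \in \pP \times \pP$, and I would prove this first. For ``$\geq$'': given $(\mu', \nu') \in \Phi(\mu, \nu)$ with common mass $t := \mu'(S) = \nu'(S)$, the cases $t = 0$ and $t = 1$ are trivial (when $t = 1$, $\mu' \leq \mu$ together with $\mu'(S) = \mu(S)$ forces $\mu' = \mu$ and likewise $\nu' = \nu$, so $\mu \preqsd \nu$ and $\tau(\mu, \nu) = 1$); when $0 < t < 1$, the normalized pair $\mu'/t \preqsd \nu'/t$ admits, by \eqref{eq:esd}, a coupling concentrated on $\{X \preceq Y\}$, and gluing it with weight $t$ to any coupling of the complementary normalized measures $(\mu - \mu')/(1 - t)$ and $(\nu - \nu')/(1 - t)$ produces a coupling of $(\mu, \nu)$ with $\PP\{X \preceq Y\} \geq t$, so $\tau(\mu, \nu) \geq t$. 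For ``$\leq$'' and attainment: take an optimal coupling $(X, Y)$ of $(\mu, \nu)$ — which exists by the optimal transport argument recorded after \eqref{eq:deftau} — and set $\mu'(B) := \PP\{X \in B,\, X \preceq Y\}$ and $\nu'(B) := \PP\{Y \in B,\, X \preceq Y\}$ (measurable because $\preceq$ is closed, hence Borel). Then $\mu' \leq \mu$, $\nu' \leq \nu$, $\mu'(S) = \nu'(S) = \PP\{X \preceq Y\} = \tau(\mu, \nu)$, and since each $h \in \hH$ satisfies $h(X) \leq h(Y)$ on $\{X \preceq Y\}$ we get $\int h \, d\mu' \leq \int h \, d\nu'$, whence $\mu' \preqsd \nu'$; so this pair attains the stated maximum.

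Granting the identity, the rest is largely formal. First, $\tau$ itself satisfies both axioms: Axiom~\ref{a:c} holds even with $\epsilon = 0$ because the maximizing pair gives $\tau(\mu, \nu) = \mu'(S)$; Axiom~\ref{a:mix} holds by the same gluing device, gluing a coupling of $(\mu_a, \nu_a)$ concentrated on $\{X \preceq Y\}$ (weight $\lambda$) to any coupling of $(\mu_b, \nu_b)$ (weight $1 - \lambda$); and $\tau$ is a measure of degree of stochastic dominance in the sense of \eqref{eq:dfdm} directly from \eqref{eq:esd}. Now let $\delta$ satisfy Axioms~\ref{a:c}--\ref{a:mix}. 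From Axiom~\ref{a:c}, $\delta(\mu, \nu) \leq \mu'(S) + \epsilon$ for some $(\mu', \nu') \in \Phi(\mu, \nu)$ and every $\epsilon > 0$, so $\delta(\mu, \nu) \leq \tau(\mu, \nu)$ by the identity. For the reverse bound write $t := \tau(\mu, \nu)$; the cases $t = 0$ (nothing to prove) and $t = 1$ (then $\mu \preqsd \nu$ by \eqref{eq:esd}, so $\delta(\mu, \nu) = 1$ by \eqref{eq:dfdm}) are immediate, so suppose $0 < t < 1$ and let $(\mu', \nu') \in \Phi(\mu, \nu)$ attain $\mu'(S) = \nu'(S) = t$. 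For each $\lambda \in [0, t]$, put $\mu_a := \mu'/t$, $\nu_a := \nu'/t$, $\mu_b := (\mu - (\lambda/t)\mu')/(1 - \lambda)$ and $\nu_b := (\nu - (\lambda/t)\nu')/(1 - \lambda)$; since $(\lambda/t)\mu' \leq (\lambda/t)\mu \leq \mu$ and similarly for $\nu$, these lie in $\pP$, one checks $\mu = \lambda \mu_a + (1-\lambda)\mu_b$ and $\nu = \lambda \nu_a + (1-\lambda)\nu_b$, and $\mu_a \preqsd \nu_a$ because scaling preserves $\preqsd$. Axiom~\ref{a:mix} then gives $\delta(\mu, \nu) \geq \lambda$, and letting $\lambda \uparrow t$ yields $\delta(\mu, \nu) \geq \tau(\mu, \nu)$. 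Combining the two inequalities gives $\delta = \tau$.

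The only genuine obstacle I anticipate is the auxiliary identity itself — specifically, the gluing of couplings and the attendant bookkeeping of normalizing constants, together with the need to dispose of the degenerate cases $t \in \{0, 1\}$ carefully. Once that identity (with attainment) is in hand, both the verification that $\tau$ satisfies the axioms and the uniqueness argument follow by routine manipulation.
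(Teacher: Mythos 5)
Your proof is correct, and the underlying constructions are the same ones the paper uses: the ordered component pair $\mu'(B) = \PP\{X \in B,\, X \preceq Y\}$, $\nu'(B) = \PP\{Y \in B,\, X \preceq Y\}$ extracted from an optimal coupling, the Bernoulli gluing of couplings, and the decomposition of $(\mu, \nu)$ along the event $\{X \preceq Y\}$. What differs is the organization: you route everything through the identity $\tau(\mu, \nu) = \max\{\mu'(S) : (\mu', \nu') \in \Phi(\mu, \nu)\}$, and this buys you something real. The paper's argument for $\delta(\mu, \nu) \leq \tau(\mu, \nu)$ exhibits one pair in $\Phi(\mu, \nu)$ with mass $\tau(\mu, \nu)$ and then invokes Axiom~\ref{a:c}; but that axiom is an existential statement, so the pair it supplies need not be the exhibited one, and the conclusion $\delta \leq \tau + \epsilon$ actually requires knowing that \emph{no} pair in $\Phi(\mu, \nu)$ has mass exceeding $\tau(\mu, \nu)$. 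That is exactly the ``$\geq$'' half of your identity, which you prove by gluing a Strassen coupling of the normalized ordered components to an arbitrary coupling of the normalized remainders --- so your version makes explicit and justifies a step the paper's proof leaves implicit. Two minor remarks: in the reverse bound you can take $\lambda = t$ directly (with $\mu_b = (\mu - \mu')/(1 - t)$, which is already a probability measure), so the limit $\lambda \uparrow t$ is unnecessary; and once you substitute the optimal-coupling maximizer for the abstract attaining pair, your decomposition coincides with the paper's $\mu_a, \mu_b, \nu_a, \nu_b$.
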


\begin{proof}
    First we claim that $\tau$ satisfies Axiom~\ref{a:c}.
    To see this, let $(X, Y)$ be an optimal coupling for $(\mu, \nu)$,
    attaining the maximum in \eqref{eq:deftau}.  Set 
        $\mu'(B) = \PP\{X \in B, \, X \preceq Y\}$
        and $\nu'(B) = \PP\{Y \in B, \, X \preceq Y\}$.
    This pair satisfies $(\mu', \nu') \in \Phi(\mu, \nu)$ and $\mu'(S) =
    \tau(\mu, \nu)$.
    To see that $\tau$ satisfies Axiom~\ref{a:mix},
    fix $\mu, \nu \in \pP$ and let the decompositions in Axiom~\ref{a:mix} be
    given, with $\lambda \in [0, 1]$ and $\mu_a \preqsd \nu_a$. 
    Let $(X_i, Y_i)$ be an optimal coupling of $(\mu_i, \nu_i)$
    for $i=a,b$.  Let $X = \ell X_a + (1 - \ell)X_b$
    where $\ell$ is independent and binary with $\PP\{\ell=1\} = \lambda$.
    Let $Y = \ell Y_a + (1 - \ell) Y_b$.  Then $(X, Y) \in \Pi(\mu, \nu)$.
    Hence $\tau(\mu, \nu) \geq \PP\{X \preceq Y\} \geq \lambda \PP\{X_a
    \preceq Y_a\} = \lambda$, and Axiom~\ref{a:mix} holds.

    Now let $\delta$ be an arbitrary measure of degree of stochastic dominance
    satisfying Axioms~\ref{a:c}--\ref{a:mix}.  Fix $(\mu, \nu)$ in $\pP \times
    \pP$ and let $(X, Y)$ be an optimal coupling of $(\mu, \nu)$.
    Define $\lambda := \PP\{X \preceq Y\}$ and the probabilities
    \begin{align*}
        \mu_a(B) & = \frac{\PP\{ X \in B, \; X \preceq Y \} }{ \lambda },
        \quad
        \nu_a(B) = \frac{ \PP\{ Y \in B, \; X \preceq Y \} }{ \lambda }, 
        \\
        \mu_b(B) & = \frac{\PP\{ X \in B, \; X \npreceq Y \} }{ 1 - \lambda},
        \quad
        \nu_b(B) = \frac{ \PP\{ Y \in B, \; X \npreceq Y \} } {1 - \lambda}.
    \end{align*}
    We have $\mu_a \preqsd \nu_a$, since, when $I \in \bB$ is increasing,
    \begin{equation*}
        \mu_a(I) 
        = \frac{\PP\{ X \in I, \; X \preceq Y \}}{ \lambda }
        \leq \frac{ \PP\{ Y \in I, \; X \preceq Y \} } {\lambda }
        = \nu_a(I).
    \end{equation*}
    For $\mu$ we have
    $\mu(B) = \PP\{ X \in B \}$, which can be decomposed as
        $\PP\{ X \in B, \; X \preceq Y \} 
            + \PP\{ X \in B, \; X \npreceq Y \} 
            = \lambda \mu_a(B) + (1 - \lambda) \mu_b(B)$.
    Since $\delta$ satisfies Axiom~\ref{a:mix}, we have $\delta(\mu, \nu) \geq
    \lambda = \PP\{ X \preceq Y \} = \tau(\mu, \nu)$.

    For the reverse inequality, recall from the arguments above that we can
    obtain an ordered component pair $(\mu', \nu')$ satisfying $\nu'(S) =
    \mu'(S) = \tau(\mu, \nu)$.  Since $\delta$ satisfies Axiom~\ref{a:c}, we
    have $\delta(\mu, \nu) \leq \tau(\mu, \nu) + \epsilon$ for all $\epsilon >
    0$.  Hence  $\delta(\mu, \nu) \leq \tau(\mu, \nu)$.
\end{proof}

\section{Measures vs Axioms}

\label{s:meax}

Let us reconsider measures of stochastic dominance other than $\tau$.  
By Theorem~\ref{t:uniqueness}, they fail at least one of the axioms.
For example, regarding Axiom~\ref{a:mix}, note that $q$ fails 
whenever the grid $\{x_i\}$ has at least three points.  To see this let $F'$,
$F''$ and $G''$ be any {\sc cdf}s on $[a, b]$ such that $F'' < G''$ on $(a, b)$.
Let $\lambda \in (0, 1)$ and let
\begin{equation*}
    F = \lambda F' + (1 - \lambda) F'', 
    \quad
    G = \lambda F' + (1 - \lambda) G''. 
\end{equation*}
Since $F' \preqsd F'$, Axiom~\ref{a:mix} implies that $q(F, G) \geq \lambda$.
On the other hand, $G(x_i) > F(x_i)$ on any interior point $x_i$.  Hence $m$
in \eqref{eq:mq} is at most 2, and $q(F, G) \leq 2/n$. Since
$\lambda$ can be arbitrarily close to 1 this contradicts Axiom~\ref{a:mix}.

For this same pair $F, G$, the fact that $G > F$ on $(a, b)$ implies that
$c^*(F, G) = 0$ for $c^*$ defined in \eqref{eq:cstar}, and hence $r(F, G) = 0$
for the restricted stochastic dominance measure defined in \eqref{eq:mr}.
Likewise, for the same pair, $\alpha(F, G) = 0$, where $\alpha$ is the almost
stochastic dominance measure.  Hence $r$ and $\alpha$ also fail to satisfy
Axiom~\ref{a:mix}.

Regarding Axiom~\ref{a:c}, the measures $q$, $r$ and $\alpha$ all fail.
To see this, let $S = [0, 1]$ and, given some positive number $\epsilon$,
let $\mu$ put mass $\epsilon$ on $0$ and $1 - \epsilon$ on $1$, and let $\nu$
put all mass on $1-\epsilon$.   Let $F$ be the {\sc cdf} of $\mu$ and let $X$ be a
draw from $\mu$.  Let $G$ and $Y$ be the {\sc cdf} of and a draw from $\nu$
respectively.  Since $Y$ is certainly $1-\epsilon$ we have $X \leq Y$ if and
only if $X = 0$, and hence $\PP\{X \leq Y\} = \epsilon$ for all couplings.  On
the other hand, $F(x) > G(x)$ iff $0 \leq x < 1-\epsilon$, and hence $r(F, G)
= 1 - \epsilon$.  If $\epsilon < 1/2$ then $1 - \epsilon > \epsilon$, and
hence $r$ fails Axiom~\ref{a:c}.
The measures $q$ and $\alpha$ also give values larger than $\epsilon$ when
$\epsilon$ is small, although the details are
omitted.

\section{Final Comments}

\label{s:c}

We end with some avenues for future research.  One is that an
estimation theory for $\tau$ should be straightforward to construct.  For
example, if we replace the {\sc cdf}s $F$ and $G$ in~\eqref{eq:im} with
empirical counterparts $F_n$ and $G_n$, then $\tau(F_n, G_n)$ is a simple
transform of the statistic used in the one-sided two-sample Kolmogorov-Smirnov
test.  This allows for construction of confidence intervals and
hypotheses tests related to the value of $\tau$.

Another topic of interest is computation.  In higher dimensional settings,
computation is difficult for all measures of partial stochastic dominance, but
$\tau$ at least has an interpretation as the solution to an optimal transport
problem.  Computing solutions of optimal transport problems is an active
research area \cite{peyre2019computational}.  

Third, there is some connection between the measure $\tau$, which takes values
in the interval $[0, 1]$ and represents the continuum between no dominance and
complete first order dominance, and the measure proposed in
\cite{muller2017between}, which represents the continuum between first order
and second order stochastic dominance. Clarifying this connection and
investigating the
preceding two topics are left for future work.

\section{Acknowledgements}

We gratefully acknowledge JSPS KAKENHI Grant 15H05729 and Australian Research
Council Grant DP120100321.

\bibliographystyle{plain}

\bibliography{sd_bib}

\begin{thebibliography}{10}

\bibitem{atkinson1987measurement}
Anthony~B Atkinson.
\newblock On the measurement of poverty.
\newblock {\em Econometrica}, 55(4):749--764, 1987.

\bibitem{davidson2000statistical}
Russell Davidson and Jean-Yves Duclos.
\newblock Statistical inference for stochastic dominance and for the measurement of poverty and inequality.
\newblock {\em Econometrica}, 68(6):1435--1464, 2000.

\bibitem{davidson2013testing}
Russell Davidson and Jean-Yves Duclos.
\newblock Testing for restricted stochastic dominance.
\newblock {\em Econometric Reviews}, 32(1):84--125, 2013.

\bibitem{fields2002stochastic}
Gary~S Fields, Jesse~B Leary, and Efe~A Ok.
\newblock Stochastic dominance in mobility analysis.
\newblock {\em Economics Letters}, 75(3):333--339, 2002.

\bibitem{follmer2011stochastic}
Hans F{\"o}llmer and Alexander Schied.
\newblock {\em Stochastic Finance: An Introduction in Discrete Time}.
\newblock De Gruyter Textbook Series. De Gruyter, 2011.

\bibitem{leshno2002preferred}
Moshe Leshno and Haim Levy.
\newblock Preferred by “all” and preferred by “most” decision makers: Almost stochastic dominance.
\newblock {\em Management Science}, 48(8):1074--1085, 2002.

\bibitem{levy1992stochastic}
Haim Levy.
\newblock Stochastic dominance and expected utility: survey and analysis.
\newblock {\em Management Science}, 38(4):555--593, 1992.

\bibitem{levy2009almost}
Moshe Levy.
\newblock Almost stochastic dominance and stocks for the long run.
\newblock {\em European Journal of Operational Research}, 194(1):250--257, 2009.

\bibitem{muller2017between}
Alfred M{\"u}ller, Marco Scarsini, Ilia Tsetlin, and Robert~L Winkler.
\newblock Between first-and second-order stochastic dominance.
\newblock {\em Management Science}, 63(9):2933--2947, 2017.

\bibitem{peyre2019computational}
Gabriel Peyr{\'e} and Marco Cuturi.
\newblock {\em Computational optimal transport}.
\newblock Now Publishers, 2019.

\bibitem{stoyanov2012metrization}
Stoyan~V Stoyanov, Svetlozar~T Rachev, and Frank~J Fabozzi.
\newblock Metrization of stochastic dominance rules.
\newblock {\em International Journal of Theoretical and Applied Finance}, 15(02), 2012.

\bibitem{strassen1965existence}
Volker Strassen.
\newblock The existence of probability measures with given marginals.
\newblock {\em The Annals of Mathematical Statistics}, pages 423--439, 1965.

\bibitem{villani2008optimal}
C{\'e}dric Villani.
\newblock {\em Optimal transport: old and new}.
\newblock Springer Science, 2008.

\end{thebibliography}

\end{document}